\colorlet{mdtRed}{red!50!black}
\colorlet{dblue}{blue!50!black}
\renewcommand*{\backref}[1]{}
\renewcommand*{\backrefalt}[4]{[{%
	\ifcase #1 Not cited.%
	\or $\uparrow$~#2.%
	\else $\uparrow$~#2.%
	\fi%
}]}
\DeclareMathOperator{\sHom}{\mathcal{H}\!\textit{om}}
\DeclareMathOperator{\sEnd}{\mathcal{E}\!\textit{nd}}
\DeclareMathOperator{\Id}{\textnormal{Id}}
\DeclareMathOperator{\At}{\textnormal{At}}
\DeclareMathOperator{\ad}{\textnormal{ad}}
\DeclareMathOperator{\GL}{\textnormal{GL}}
\DeclareMathOperator{\Der}{\mathcal{D}\!{\it er}}
\newcommand{\mf}[1]{\mathfrak{#1}}
\newcommand{\mc}[1]{\mathcal{#1}}
\newcommand{\scr}[1]{\mathscr{#1}}
\newcommand{\bb}[1]{\mathbb{#1}}
\newtheorem{theorem}{Theorem}[subsection]
\newtheorem{proposition}[theorem]{Proposition}
\theoremstyle{definition}
\newtheorem{definition}[theorem]{Definition}
\newtheorem{remark}[theorem]{Remark}
\numberwithin{equation}{subsection}
\begin{document}

\baselineskip=15.5pt

\title[Logarithmic connection on bundles]{Criterion for existence of a logarithmic connection on a principal bundle 
over a smooth complex projective variety} 

\author[S. Gurjar]{Sudarshan Gurjar}
\address{
	\begin{tabular}{l}
		{Sudarshan Gurjar} \\
		\hspace{.1in} Department of Mathematics,\\
		\hspace{.1in} Indian Institute of Technology Bombay, \\
		\hspace{.1in} Powai, Mumbai 400076, Maharashtra, India. \\
		\hspace{.1in} Email: {\rm \texttt{sgurjar@math.iitb.ac.in}} 
	\end{tabular}
}

\author[A. Paul]{Arjun Paul} 
\address{
	\begin{tabular}{l}
		{Arjun Paul} \\
		\hspace{.1in} Department of Mathematics,\\
		\hspace{.1in} Indian Institute of Technology Bombay, \\
		\hspace{.1in} Powai, Mumbai 400076, Maharashtra, India. \\
		\hspace{.1in} Email: {\rm \texttt{arjunp@math.iitb.ac.in}} 
	\end{tabular}
}

\subjclass[2010]{14J60, 53C07, 32L10}

\keywords{Logarithmic connection; residue; vector bundle; principal bundle} 

\date{\today}

\thanks{Corresponding author: Arjun Paul}

\begin{abstract}
	Let $X$ be a connected smooth complex projective variety of dimension $n \geq 1$. 
	Let $D$ be a simple normal crossing divisor on $X$. 
	Let $G$ be a connected complex Lie group, and $E_G$ a holomorphic principal $G$-bundle on $X$. 
	In this article, we give criterion for existence of a logarithmic connection on $E_G$ 
	singular along $D$. 
\end{abstract}

\maketitle


\section{Introduction}
A theorem of Weil \cite{Weil-1938} says that a holomorphic vector bundle $E$ 
on a smooth complex projective curve $X$ admits a holomorphic connection if and only if 
each indecomposable holomorphic direct summand of $E$ has degree $0$; see \cite{Atiyah-1957}. 
For connected reductive linear algebraic group $G$ over $\bb C$, this result of Weil and Atiyah 
is generalized to the case of holomorphic principal $G$-bundles on a smooth complex projective 
curve in \cite{Azad-Biswas-2002}. It follows from \cite[Theorem 4, p.~192]{Atiyah-1957} that, 
not every holomorphic bundle on a compact K\"ahler manifold can admit a holomorphic connection. 
Therefore, one can ask for criterion for a holomorphic bundle on $X$ to admit a meromorphic connection. 
Simplest case of meromorphic connection is logarithmic connection. 
So it natural to ask when a given holomorphic bundle on $X$ admits a logarithmic connection singular along a 
given divisor with prescribed residues. When $X$ is a smooth complex projective curve, 
in \cite{Biswas-Dan-Paul-2018}, a necessary and sufficient criterion for a vector bundle on $X$ to admit 
a logarithmic connection singular along a given reduced effective divisor $D$ on $X$ with prescribed rigid 
residues along $D$ is given. This result is further generalized to the case of holomorphic principal 
$G$-bundles over smooth complex projective curve in \cite{Biswas-Dan-Paul-Saha-2017} when $G$ is a 
connected reductive linear algebraic group over $\bb C$. 
When $X$ is a smooth complex projective variety of dimension of more than one, 
no such criterion for existence of logarithmic connection on a holomorphic bundle on $X$ with 
prescribed residues along a given reduced effective divisor is known to the best of our knowledge. 
In this article, we attempt to study this problem. 

\subsection{Outline of the paper} 
Unless otherwise specified, $X$ is a connected smooth complex projective variety of dimension at least one, 
and $D$ a reduced effective divisor on $X$. We denote by $G$ a connected affine algebraic group over $\bb C$. 
In Section \S\ref{sec:preli}, we recall definitions of simple normal crossing divisor, 
logarithmic connection on holomorphic vector bundles on $X$, and their residues along a simple normal 
crossing divisor on $X$. 
In Section \S\ref{sec:log-con-princ-bun}, we extend this notion of logarithmic connection for principal 
$G$-bundles $E_G$ on $X$, and discuss the notion of residue of a logarithmic connection on $E_G$ singular 
along a simple normal crossing divisor on $X$. 

In Section \S\ref{sec:extn-str-gp}, we study logarithmic connection on principal bundles under the 
extensions of structure group. Let $H$ be a connected closed algebraic subgroup of $G$ over $\bb C$. 
Let $E_H$ be a holomorphic principal $H$-bundle on $X$. Let $E_H(G)$ be the holomorphic principal 
$G$-bundle on $X$ obtained by extending the structure group of $E_H$ by the inclusion map $H \subset G$. 
Then we have the following (see Proposition \ref{prop:red-str-gp}). 
\begin{proposition}
	If $E_H$ admits a logarithmic connection singular along $D$, then $E_H(G)$ admits a logarithmic connection 
	singular along $D$. The converse holds if $H$ is reductive. 
\end{proposition}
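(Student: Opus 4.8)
The plan is to phrase everything through the logarithmic Atiyah sequence and to exploit its functoriality under the inclusion $H \subset G$. Recall that a logarithmic connection on a principal bundle singular along $D$ is the same datum as a holomorphic splitting of its logarithmic Atiyah sequence
\[
0 \longrightarrow \ad(E_H) \longrightarrow \At(E_H) \longrightarrow T_X(-\log D) \longrightarrow 0,
\]
equivalently, the vanishing of the associated logarithmic Atiyah class in $H^1(X,\, \ad(E_H) \otimes \Omega^1_X(\log D))$. The inclusion $\iota \colon \mf{h} \hookrightarrow \mf{g}$ is $H$-equivariant for the adjoint actions, so it induces a homomorphism of adjoint bundles $\ad(\iota) \colon \ad(E_H) \to \ad(E_H(G)) = E_H \times_H \mf{g}$, and hence a commutative ladder between the logarithmic Atiyah sequences of $E_H$ and $E_H(G)$ in which the rightmost terms $T_X(-\log D)$ are identified by the identity map. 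I would first record that, since $E_H(G)$ arises from $E_H$ by extension of structure group, its logarithmic Atiyah sequence is the pushout of that of $E_H$ along $\ad(\iota)$; equivalently, on Atiyah classes one has $\operatorname{at}(E_H(G)) = \ad(\iota)_* \operatorname{at}(E_H)$.

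For the forward implication, suppose $E_H$ carries a logarithmic connection, i.e.\ a splitting $s \colon T_X(-\log D) \to \At(E_H)$. Composing $s$ with the middle vertical arrow $\At(E_H) \to \At(E_H(G))$ of the ladder produces a homomorphism $T_X(-\log D) \to \At(E_H(G))$, and since the ladder is the identity on the quotient $T_X(-\log D)$, this composite is again a section of the projection $\At(E_H(G)) \to T_X(-\log D)$. Hence it is a logarithmic connection on $E_H(G)$ singular along $D$. On Atiyah classes this is simply $\operatorname{at}(E_H(G)) = \ad(\iota)_* \operatorname{at}(E_H) = \ad(\iota)_*(0) = 0$.

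For the converse, assume $H$ is reductive. The key input is that the adjoint representation of $H$ on $\mf{g}$ is then completely reducible, so the $H$-submodule $\mf{h} \subset \mf{g}$ admits an $H$-invariant complement $\mf{m}$; choosing one gives an $H$-equivariant linear retraction $r \colon \mf{g} \to \mf{h}$ with $r \circ \iota = \Id_{\mf{h}}$. This induces a retraction $\ad(r) \colon \ad(E_H(G)) \to \ad(E_H)$ of $\ad(\iota)$. Using the pushout description of $\At(E_H(G))$, together with $\ad(r)$ and the identity of $\At(E_H)$ (which agree on $\ad(E_H)$ precisely because $r \circ \iota = \Id$), I would build a bundle retraction $\phi \colon \At(E_H(G)) \to \At(E_H)$ lying over the identity of $T_X(-\log D)$. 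Then, given a logarithmic connection $\sigma \colon T_X(-\log D) \to \At(E_H(G))$ on $E_H(G)$, the composite $\phi \circ \sigma$ is a section of $\At(E_H) \to T_X(-\log D)$, i.e.\ a logarithmic connection on $E_H$ singular along $D$. Cohomologically this is the statement $r_* \circ \ad(\iota)_* = \Id$, whence $\operatorname{at}(E_H) = r_* \operatorname{at}(E_H(G)) = 0$.

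The main obstacle, and the step I would write out with care, is the identification of the logarithmic Atiyah sequence of the extended bundle $E_H(G)$ with the pushout of that of $E_H$ along $\ad(\iota)$ (equivalently, the functoriality $\operatorname{at}(E_H(G)) = \ad(\iota)_* \operatorname{at}(E_H)$ in the logarithmic setting); verifying that all maps respect the logarithmic structure along $D$ and the projection onto $T_X(-\log D)$ is where the bookkeeping lies. Everything else is formal once the reductive hypothesis supplies the $H$-equivariant splitting $\mf{g} = \mf{h} \oplus \mf{m}$, which is exactly the point at which the converse would fail without reductivity, since then no $H$-equivariant retraction $r$ need exist.
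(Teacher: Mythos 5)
Your proof is correct and follows essentially the same route as the paper: the forward direction composes the splitting with the middle vertical arrow of the commutative ladder between the two logarithmic Atiyah sequences, and the converse rests on complete reducibility of the $H$-action on $\mf g$ giving an $H$-equivariant retraction $r\colon \mf g \to \mf h$ (note only that the middle term of the logarithmic sequence should be the subsheaf $\mc A_D(E_H)\subset\At(E_G)$, not $\At(E_H)$ itself). The sole difference is presentational: where you build an explicit retraction of logarithmic Atiyah bundles from $r$, the paper packages the same input as injectivity of the induced map on $H^1(X,\ad(E_H)\otimes\Omega_X^1(\log D))$, deferring to the proof of \cite[Lemma 3.3]{Biswas-Dan-Paul-Saha-2017}.
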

The case of parabolic subgroup $P$ of a reductive affine algebraic group $G$ over $\bb C$ is interesting. 
Let $L \cong P/R_u(P)$ be the Levi factor of $P$, where $R_u(P)$ is the unipotent radical of $P$. 
Let $E_L$ be the corresponding holomorphic principal $L$-bundle on $X$ obtained by extending the structure 
group from $P$ to $L$. The natural action of $P$ on the Lie algebra $\mf n := {\rm Lie}(R_u(P))$ 
give rise to a holomorphic vector bundle $E_P(\mf n) := E_P\times^P\mf n$ on $X$. 
Then we have the following (see Theorem \ref{thm:parabolic-case}). 
\begin{theorem} 
	Suppose that $H^1(X, E_P(\mf n)\otimes\Omega_X^1(\log D)) = 0$. Then $E_P$ admits a logarithmic connection 
	singular along $D$ if $E_L$ admits a logarithmic connection singular along $D$. 
\end{theorem}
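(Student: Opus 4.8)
The plan is to rephrase the existence of a logarithmic connection as the vanishing of a logarithmic Atiyah class, and then to compare the classes of $E_P$ and $E_L$ through the short exact sequence of adjoint bundles coming from the Levi decomposition. Recall from the preceding sections that a holomorphic principal $G$-bundle $E_G$ on $X$ admits a logarithmic connection singular along $D$ if and only if its logarithmic Atiyah sequence
\begin{equation*}
0 \longrightarrow \ad(E_G) \longrightarrow \At_D(E_G) \longrightarrow T_X(-\log D) \longrightarrow 0
\end{equation*}
splits holomorphically, equivalently if and only if its extension class $\alpha(E_G) \in H^1(X, \ad(E_G)\otimes\Omega_X^1(\log D))$ vanishes, where $T_X(-\log D) = (\Omega_X^1(\log D))^\vee$. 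I would therefore reduce the statement to the implication $\alpha(E_L)=0 \Rightarrow \alpha(E_P)=0$ under the cohomological hypothesis.

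First I would build the relevant exact sequence of bundles. The adjoint action of $P$ on $\mf p := \Lie(P)$ preserves the ideal $\mf n = \Lie(R_u(P))$, and since $R_u(P)$ acts trivially on the quotient $\mf p/\mf n$, the induced $P$-action on $\mf p/\mf n \cong \mf l$ factors through $L$ and coincides with the adjoint action of $L$. This gives a short exact sequence of $P$-modules $0 \to \mf n \to \mf p \to \mf l \to 0$, and passing to associated bundles together with the identification $E_P(\mf l) = E_P\times^P\mf l = E_L\times^L\mf l = \ad(E_L)$ yields
\begin{equation*}
0 \longrightarrow E_P(\mf n) \longrightarrow \ad(E_P) \longrightarrow \ad(E_L) \longrightarrow 0.
\end{equation*}
Tensoring by the locally free sheaf $\Omega_X^1(\log D)$ preserves exactness, and the long exact sequence in cohomology contains
\begin{equation*}
H^1(X, E_P(\mf n)\otimes\Omega_X^1(\log D)) \longrightarrow H^1(X, \ad(E_P)\otimes\Omega_X^1(\log D)) \xrightarrow{\ \psi\ } H^1(X, \ad(E_L)\otimes\Omega_X^1(\log D)).
\end{equation*}
By the hypothesis $H^1(X, E_P(\mf n)\otimes\Omega_X^1(\log D)) = 0$, exactness at the middle term forces $\psi$ to be injective.

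The decisive input is the naturality of the logarithmic Atiyah class: the structure group extension $E_P \to E_L$ induces a morphism of logarithmic Atiyah sequences which is the identity on $T_X(-\log D)$ and the projection $\ad(E_P)\to\ad(E_L)$ on the sub-bundles, so that the connecting homomorphism gives $\psi(\alpha(E_P)) = \alpha(E_L)$. Granting this, the argument closes at once: if $E_L$ admits a logarithmic connection then $\alpha(E_L)=0$, hence $\psi(\alpha(E_P))=0$, and the injectivity of $\psi$ forces $\alpha(E_P)=0$, i.e.\ $E_P$ admits a logarithmic connection singular along $D$.

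The step I expect to be the main obstacle is verifying this naturality carefully in the logarithmic setting: one must check that the morphism of Atiyah sequences is well defined over all of $X$, including along $D$, and that the extension class is genuinely transported to $\alpha(E_L)$ by the pushforward $\ad(E_P)\to\ad(E_L)$. I would settle this either by a \v{C}ech computation of the two classes with respect to a common trivializing cover of $X$, comparing the $1$-cocycles obtained from local logarithmic connections, or by extracting it from the forward direction of Proposition \ref{prop:red-str-gp} (a logarithmic connection on $E_P$ pushes forward to one on $E_L$), read at the level of extension classes rather than splittings.
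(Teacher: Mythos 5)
Your argument is correct, and it reaches the conclusion by a cohomological route where the paper works directly with splittings. The paper's proof takes the given splitting $\eta : TX(-\log D) \to \mc A_D(E_L)$, pulls it back through $\beta : \mc A_D(E_P) \to \mc A_D(E_L)$ to form the subsheaf $\mc F := \beta^{-1}(\eta(TX(-\log D)))$, observes that $\mc F$ is an extension of $TX(-\log D)$ by $E_P(\mf n)$, and uses the hypothesis to kill the obstruction to splitting that single extension; a splitting of $\mc F$ composed with $\mc F \hookrightarrow \mc A_D(E_P)$ is the desired connection. You instead tensor $0 \to E_P(\mf n) \to \ad(E_P) \to \ad(E_L) \to 0$ with $\Omega_X^1(\log D)$, extract injectivity of $\psi$ from the long exact sequence, and transport the logarithmic Atiyah class. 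The two proofs are the same homological algebra packaged differently (both amount to exactness of $\mathrm{Ext}^1(TX(-\log D), -)$ applied to the adjoint-bundle sequence), but the mechanics differ: the paper's version actually constructs the connection on $E_P$ as a lift of the one on $E_L$, while yours isolates the single input needed, namely exactness at the middle term of $H^1(E_P(\mf n)\otimes\Omega_X^1(\log D)) \to H^1(\ad(E_P)\otimes\Omega_X^1(\log D)) \to H^1(\ad(E_L)\otimes\Omega_X^1(\log D))$. The naturality statement you flag as the main remaining obstacle is not really one in the context of this paper: it is established in \S\ref{sec:extn-str-gp} for an arbitrary homomorphism $f : H \to G$ of structure groups (the map \eqref{log-At-cls-H-G} sends $\Phi_D(E_H)$ to $\Phi_D(E_G)$), and applying it with $H = P$, $G = L$, $f = q'$ gives precisely $\psi(\Phi_D(E_P)) = \Phi_D(E_L)$, since $dq'$ identifies $\mf p/\mf n$ with $\mf l$ and hence the map $\alpha$ on adjoint bundles with the quotient map in your sequence. (One terminological quibble: the map $\psi$ is the induced map on $H^1$, not a connecting homomorphism; what you are using is functoriality of extension classes under a morphism of extensions that is the identity on the quotient term.)
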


In Section \S\ref{sec-restriction-thms}, we discuss how existence of logarithmic connection on $E_G$ 
singular along $D$ can be ensured from existence of logarithmic connection on $E_G\big\vert_{X_n}$, 
where $X_n$ is some sufficiently high degree hypersurface in $X$ intersecting $D$ properly. 
More precisely, we fix an embedding $X \hookrightarrow \bb{CP}^N$, for some $N > 0$. 
By a hypersurface $X_n$ of degree $n$ in $X$, we mean $X \cap H_n$, for some 
hypersurface $H_n$ in $\bb{CP}^N$ of degree $n$. 
In \cite[Proposition 21]{Atiyah-1957}, it is shown that if $\dim_{\bb C}(X) \geq 3$, then $E_G$ admits a 
holomorphic connection if and only if for some smooth hypersurface $X_n$ in $X$ of sufficiently large degree, 
the principal $G$-bundle $E_G\big\vert_{X_n}$ admits a holomorphic connection. 
However, it is shown in \cite{Atiyah-1957} that this result fails if $\dim_{\bb C}(X) = 2$; 
see also \cite{Biswas-Gurjar-2018}. 
Also there are no complete answers known for this problem if $\dim_{\bb C}(X) = 2$. 
We prove the following analogue of \cite[Proposition 21]{Atiyah-1957} in the case of logarithmic 
connections on $E_G$ singular along $D$ in $X$ (see Theorem \ref{thm-1}). 

\begin{theorem}
	With the above notations, if $\dim_{\bb C}(X) \geq 3$ and $D \subset X$ a reduced effective divisor in $X$, 
	then $E_G$ admits a logarithmic connection singular along $D$ if and only if for some smooth 
	hypersurface $X_n$ in $X$ of sufficiently large degree $n$, intersecting $D$ properly, the principal 
	$G$-bundle $E_G\big\vert_{X_n}$ on $X_n$ admits a logarithmic connection singular along $D \cap X_n$. 
\end{theorem}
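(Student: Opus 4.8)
The plan is to translate the existence of a logarithmic connection into the splitting of the logarithmic Atiyah sequence, and then to compare the resulting obstruction classes on $X$ and on $X_n$ by means of restriction (ideal-sheaf) sequences together with Serre duality and Serre vanishing. Recall from Section \ref{sec:log-con-princ-bun} that $E_G$ admits a logarithmic connection singular along $D$ if and only if the logarithmic Atiyah sequence
\begin{equation}\label{eq:logAtSeq}
0 \longrightarrow \ad(E_G) \longrightarrow \At(E_G)(\log D) \longrightarrow T_X(-\log D) \longrightarrow 0
\end{equation}
splits holomorphically, equivalently if and only if its extension class $\theta \in \mathrm{Ext}^1_{\mc O_X}(T_X(-\log D),\, \ad(E_G)) \cong H^1(X,\, \ad(E_G)\otimes\Omega_X^1(\log D))$ vanishes. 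Write $\mc A := \ad(E_G)\otimes\Omega_X^1(\log D)$. By Bertini's theorem, for $n\gg 0$ a general member $X_n$ of $|\mc O_X(n)|$ is smooth and transverse to $D$ (in particular it meets $D$ properly), so that $D\cap X_n$ is again simple normal crossing on $X_n$ and one has the logarithmic conormal sequence
\begin{equation}\label{eq:logConormal}
0 \longrightarrow \mc O_{X_n}(-n) \longrightarrow \Omega_X^1(\log D)\big|_{X_n} \longrightarrow \Omega_{X_n}^1(\log(D\cap X_n)) \longrightarrow 0 .
\end{equation}

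For the direct implication I note that a holomorphic splitting of \eqref{eq:logAtSeq} restricts to one of the corresponding logarithmic Atiyah sequence of $E_G\big|_{X_n}$: since $\ad(E_G)\big|_{X_n}=\ad(E_G\big|_{X_n})$ and $X_n$ meets $D$ transversally, the restricted connection has at worst logarithmic poles along $D\cap X_n$. Hence existence of a logarithmic connection on $E_G$ yields one on $E_G\big|_{X_n}$ for every such $X_n$, in particular for one of sufficiently large degree.

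The substance lies in the converse. By functoriality of the Atiyah sequence, the extension class $\theta_n\in H^1(X_n,\ad(E_G\big|_{X_n})\otimes\Omega_{X_n}^1(\log(D\cap X_n)))$ of $E_G\big|_{X_n}$ is the image of the restriction $\theta\big|_{X_n}\in H^1(X_n,\mc A\big|_{X_n})$ under the map $q_\ast$ induced by tensoring the surjection in \eqref{eq:logConormal} with $\ad(E_G)\big|_{X_n}$. It therefore suffices to prove, for $n\gg 0$, that \emph{(i)} the restriction $H^1(X,\mc A)\to H^1(X_n,\mc A\big|_{X_n})$ and \emph{(ii)} the map $q_\ast\colon H^1(X_n,\mc A\big|_{X_n})\to H^1(X_n,\ad(E_G\big|_{X_n})\otimes\Omega_{X_n}^1(\log(D\cap X_n)))$ are both injective; for then $\theta_n=0$ forces $\theta\big|_{X_n}=0$ by \emph{(ii)}, hence $\theta=0$ by \emph{(i)}, so \eqref{eq:logAtSeq} splits and $E_G$ carries the desired connection.

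For \emph{(i)} I use the ideal-sheaf sequence $0\to\mc A(-n)\to\mc A\to i_\ast(\mc A\big|_{X_n})\to 0$; its long exact sequence makes the restriction injective once $H^1(X,\mc A(-n))=0$, which holds for $n\gg 0$ by Serre duality and Serre vanishing because $\dim X\geq 2$. For \emph{(ii)}, the cohomology of \eqref{eq:logConormal} tensored with $\ad(E_G)\big|_{X_n}$ makes $q_\ast$ injective once $H^1(X_n,\ad(E_G)\big|_{X_n}(-n))=0$; comparing with $X$ through $0\to\ad(E_G)(-2n)\to\ad(E_G)(-n)\to i_\ast(\ad(E_G)\big|_{X_n}(-n))\to 0$ reduces this to the vanishing of $H^1(X,\ad(E_G)(-n))$ and of $H^2(X,\ad(E_G)(-2n))$. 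The former vanishes for $n\gg 0$ since $\dim X\geq 2$, while the latter, dual by Serre duality to $H^{\dim X-2}(X,\ad(E_G)^\vee\otimes\omega_X(2n))$, vanishes for $n\gg 0$ precisely because $\dim X-2\geq 1$. This $H^2$-vanishing is the crux and the only place where the hypothesis $\dim X\geq 3$ is used; its breakdown for $\dim X=2$ is exactly what accounts for the failure of the statement in that dimension, in line with Atiyah's original result \cite{Atiyah-1957}.
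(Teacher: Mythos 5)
Your proposal is correct and follows essentially the same route as the paper: both reduce the statement to the vanishing of the logarithmic Atiyah class, prove injectivity of the restriction map on $H^1$ via the ideal-sheaf sequence and Serre vanishing/duality, and prove injectivity of the map induced by the logarithmic conormal sequence via the further vanishing of $H^1(X,\ad(E_G)(-n))$ and $H^2(X,\ad(E_G)(-2n))$, which is exactly where $\dim_{\bb C}(X)\geq 3$ enters. The only cosmetic difference is that you take the kernel of $\Omega_X^1(\log D)\big|_{X_n}\to\Omega_{X_n}^1(\log(D\cap X_n))$ to be the conormal bundle $\mc O_{X_n}(-n)$, while the paper derives the sequence from the twisted conormal sequence of $\Omega_X^1(D)$ and so writes the kernel as $\mc O_X(D-X_n)\big|_{X_n}$; either way the required vanishing holds for $n\gg 0$, so the arguments coincide in substance.
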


\section{Preliminaries}\label{sec:preli}
\subsection{Simple normal crossing divisor}\label{sec:SNC}
Let $X$ be a connected smooth complex projective variety of dimension at least one. 
We denote by $TX$ (respectively, $\Omega_X^1$) the tangent bundle (respectively, cotangent bundle) of $X$. 
The ideal sheaf $\mathscr{I}_D$ of an effective divisor $D$ on $X$ is a line bundle on $X$, 
denoted $\mc O_X(-D)$. 
\begin{definition}
	An effective divisor $D$ on $X$ is said to be a {\it simple normal crossing divisor} if $D$ is reduced, 
	each irreducible components of $D$ are smooth, and for each point $x \in X$, there is a system of regular 
	elements (local parameters) $z_1, \ldots, z_n \in \mf m_x$ such that the stalk $\mc O_X(-D)_x$ of the line bundle 
	$\mc O_X(-D)$ at $x$ is generated by the product $z_1\cdots z_r$, for some integer $r$ with $1 \leq r \leq n$. 
\end{definition}
In other words, a {\it simple normal crossing divisor} on $X$ is a reduced effective divisor $D$ on $X$, 
all of whose irreducible components are smooth, and locally for some choice of coordinate functions $(z_1,\ldots, z_n)$ 
around a point $x_0 \in U \subset X$, $D \cap U$ is given by an equation $z_1 \cdots z_r = 0$, for some integer $r$ 
with $1 \leq r \leq n$. This means, the irreducible components of $D$ passing through $x_0$ are given by the equations 
$z_i = 0$, for $i = 1, \ldots, r$, and they intersects each others transversally. 

\subsection{Logarithmic connection}
Let $D \subset X$ be a reduced effective divisor on $X$. 
For an integer $p \geq 0$, a {\it meromorphic $p$-form} on $X$ is a section of 
$\Omega_X^p(D) := \Omega_X^p \otimes_{\mc O_X} \mc O_X(D)$. 
A meromorphic $p$-form $\alpha \in (\Omega_X^p(D))(U)$ on an open subset $U \subset X$ is said to have 
a {\it logarithmic pole along $D$} if $\alpha$ is holomorphic on $U \setminus (U\cap D)$ and $\alpha$ 
has pole of order at most one along each irreducible component of $D$, and the same holds for $d\alpha$, 
where $d$ denotes the holomorphic exterior differential operator (see \cite[p.~197]{Voisin-I}). 
Let $\Omega_X^p(\log D)$ be the subsheaf of meromorphic $p$-forms on $X$ with at most logarithmic pole along $D$. 

Let $p : E \to X$ be a holomorphic vector bundle of rank $r$ on $X$. 
By abuse of notation, we denote by $E$ the sheaf of holomorphic sections of $p : E \to X$; 
this is a locally free coherent sheaf of $\mc O_X$-modules of rank $r$ on $X$. 
\begin{definition}\label{log-conn-def}
	A {\it logarithmic connection} on $E$ singular along $D$ is a $\bb C$-linear sheaf homomorphism 
	$$\nabla : E \longrightarrow E \otimes_{\mc O_X} \Omega_X^1(\log D)$$ 
	satisfying the Leibniz rule 
	$$\nabla(f\cdot s) = f\nabla(s) + s \otimes df,$$ 
	for all locally defined section $f$ of $\mc O_X$ and locally defined section $s$ of $E$. 
\end{definition}

\subsection{Residue of a logarithmic connection}\label{sec:residue-log-conn-vb}
We now recall the definition of reside of a logarithmic connection from \cite{Deligne-1970, Ohtsuki-1982}. 
Let $D$ be a simple normal crossing divisor on $X$. 
Write $D = \bigcup\limits_{j\in J} D_j$ as a union of all of its irreducible components. 
Let $E$ be a holomorphic vector bundle of rank $r$ on $X$ admitting a logarithmic connection 
$$\nabla : E \longrightarrow E \otimes_{\mc O_X} \Omega_X^1(\log D)$$ 
singular along $D$. 
Since each irreducible component $D_j$ of $D$ are smooth, using {\it Poincar\'e residue map} 
(see \cite[p.~211]{Voisin-I}, \cite[p.~147]{Griffiths-Harris-1994}), we have the following homomorphism 
\begin{equation*}
{\rm Res}_{D_j} : E \otimes \Omega_X^1(\log D) \longrightarrow E\otimes \mc O_{D_j}\,, \ \ \forall\ j. 
\end{equation*}
Then the composite map 
\begin{equation}\label{residue-map}
	{\rm Res}_{D_j}\circ\nabla : E\big\vert_{D_j} \longrightarrow E\big\vert_{D_j}
\end{equation} 
is a $\mc O_{D_j}$-module homomorphism, and hence defines a section 
$${\rm Res}_{D_j}(\nabla) \in H^0(D_j, \sEnd(E)\big\vert_{D_j})\,,$$
called the {\it residue of $\nabla$ along $D_j$}. 
For the sake of completeness, we recall explicit description of the reside of $\nabla$ along $D_j$ 
using local coordinates; \cite{Ohtsuki-1982}. 

Since $D$ is a simple normal crossing divisor on $X$, we can choose an open cover 
$\{U_\lambda : \lambda \in \Lambda\}$ of $X$ such that for each $\lambda \in \Lambda$, 
\begin{enumerate}[(I)]\label{local-exp-residue}
	\item $E\big\vert_{U_\lambda}$ is trivial, and 
	\item for each irreducible component $D_j$ of $D$, with $D_j \cap U_\lambda \neq \emptyset$, 
	we can choose a local coordinate function $f_{\lambda j} \in \mc O_X(U_\lambda)$ for a local 
	coordinate system on $U_\lambda$, such that $f_{\lambda j}$ is a defining equation of $D_j \cap U_\lambda$. 
	If $D_j \cap U_\lambda = \emptyset$, we take $f_{\lambda j} = 1$. 
\end{enumerate}

If $\nabla_\lambda$ is the connection matrix of $\nabla$ with respect to a holomorphic local frame 
$s_\lambda = (s_{\lambda 1}, \ldots, s_{\lambda r})$ for $E$ on $U_\lambda$, then we have 
\begin{equation}
	\nabla(s_\lambda) = \nabla_\lambda \otimes s_\lambda, 
\end{equation}
where $\nabla_\lambda$ is a $r\times r$ matrix whose entries are holomorphic sections of 
$\Omega_X^1(\log D)$ over $U_\lambda$. For each $D_j$, the matrix $\nabla_\lambda$ can be written as 
\begin{equation}
	\nabla_\lambda = R_{\lambda j} \frac{df_{\lambda j}}{f_{\lambda j}} + S_{\lambda j}\,, 
\end{equation}
where $R_{\lambda j}$ is a $r\times r$ matrix with entries in $\mc O_X(U_{\lambda})$ and $S_{\lambda j}$ is a 
$r\times r$ matrix with entries in $(\Omega_X^1(\log D))(U_\lambda)$ with simple pole along 
$\bigcup\limits_{j' \neq j} D_{j'}$. Then 
\begin{equation}
	{\rm Res}_{D_j}(\nabla_\lambda) := R_{\lambda j}\big\vert_{U_\lambda\cap D_j}\, 
\end{equation}
is a $r\times r$ matrix whose entries are holomorphic functions on $U_\lambda\cap D_j$; it is independent of 
choice of local defining equation $f_{\lambda j}$ for $D_j$. 
Then $\{{\rm Res}_{D_j}(\nabla_\lambda)\}_{\lambda\in \Lambda}$ defines a holomorphic global section 
\begin{equation}\label{residue}
	{\rm Res}_{D_j}(\nabla) \in H^0(D_j, \sEnd(E\big\vert_{D_j}))\,, 
\end{equation}
known as the {\it residue of $\nabla$ along $D_j$}. 

\begin{remark}
	If we further assume that intersections of any finite number of irreducible components of $D$ are connected, 
	then the Chern classes of $E$ can be computed in terms of the residues of the logarithmic connection $\nabla$ 
	along the irreducible components of $D$, and the first Chern classes of the line bundles associated to the 
	irreducible components of $D$; see \cite[Theorem 3, p.~16]{Ohtsuki-1982}. 
\end{remark}

\section{Logarithmic Connection On Principal Bundles}\label{sec:log-con-princ-bun}
\subsection{Logarithmic Atiyah exact sequence}\label{sec:log-At-ex-seq}
Let $G$ be a connected complex Lie group with Lie algebra $\mf g$. Let 
\begin{equation}\label{principal-bundle}
	p : E_G \longrightarrow X 
\end{equation}
be a holomorphic principal $G$-bundle on $X$. The holomorphic $G$-action on $E_G$ induces a 
holomorphic $G$-action on the holomorphic tangent bundle $TE_G$ of $E_G$, and the associated quotient 
$\At(E_G) := TE_G/G$ is a holomorphic vector bundle on $X$, known as the {\it Atiyah bundle} 
of $E_G$; the sections of $\At(E_G)$ are given by $G$-invariant holomorphic vector fields on $E_G$. 
Let $\ad(E_G) := E_G \times^G \mf g$ be the {\it adjoint vector bundle} associated to the adjoint 
representation of $G$ to its Lie algebra $\mf g$. 
The surjective submersion $p$ in \eqref{principal-bundle} induces a short exact sequence of holomorphic 
vector bundles on $X$, 
\begin{equation}\label{Atiyah-ext-seq}
	0 \longrightarrow \ad(E_G) \longrightarrow \At(E_G) \stackrel{d'p}{\longrightarrow} TX \longrightarrow 0, 
\end{equation}
called the {\it Atiyah exact sequence} of $E_G$. A {\it holomorphic connection} on $E_G$ is given by a holomorphic 
vector bundle homomorphism $\eta : TX \to \At(E_G)$ such that $d'p\circ \eta = \Id_{TX}$; see \cite{Atiyah-1957}. 
We now modify the exact sequence \eqref{Atiyah-ext-seq} to define a logarithmic Atiyah exact sequence. 

Let $D$ be a reduced effective divisor on $X$. 
Then $TX(-\log D) := (\Omega_X^1(\log D))^\vee$ is a locally free $\mc O_X$-submodule of $TX$. 
In fact, we have $TX(-D) \subseteq TX(-\log D) \subset TX$. 
Then we have a locally free $\mc O_X$-submodule $\mc A_D(E_G) := (d'p)^{-1}(TX(-\log D))$ of $\At(E_G)$ which fits 
into the following short exact sequence of locally free $\mc O_X$-modules  
\begin{equation}\label{log-Atiyah-ext-seq}
	0 \longrightarrow \ad(E_G) \stackrel{\iota_D}{\longrightarrow} \mc A_D(E_G) 
	\stackrel{\widetilde{d'p}}{\longrightarrow} TX(-\log D) \longrightarrow 0, 
\end{equation}
called the {\it logarithmic Atiyah exact sequence} of $E_G$ for the divisor $D$, (see also \cite{Biswas-Dan-Paul-2018}). 
Moreover, we have the following commutative diagram of $\mc O_X$-module homomorphisms 
\begin{equation}\label{log-hol-At-ext-seq}
\begin{gathered}
	\xymatrix{
		0 \ar[r] & \ad(E_G) \ar@{=}[d] \ar[r]^{\iota_D} & \mc A_D(E_G) \ar@{^(->}[d]^J 
		\ar[r]^-{\widetilde{d'p}} & TX(-\log D) \ar@{^(->}[d]^I \ar[r] & 0 \\ 
		0 \ar[r] & \ad(E_G) \ar[r]^{\iota} & \At(E_G) \ar[r]^-{d'p} & TX \ar[r] & 0 
	}
\end{gathered}
\end{equation} 

Let $E$ be a holomorphic vector bundle $E$ of rank $n$ on $X$. 
Let $p : E_{\GL_n(\bb C)} \longrightarrow X$ be the holomorphic frame bundle of $E$; 
this is a principal $\GL_n(\bb C)$-bundle on $X$. 
Note that, $\ad(E_{\GL_n(\bb C)})$ is naturally isomorphic to $\sEnd(E)$. 

\begin{proposition}\label{prop-1}
	$E$ admits a logarithmic connection $\nabla : E \to E \otimes \Omega_X^1(\log D)$ singular along $D$ if and only if 
	the exact sequence in \eqref{log-Atiyah-ext-seq} associated to $E_{\GL_n(\bb C)}$ splits holomorphically. 
\end{proposition}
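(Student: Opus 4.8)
The plan is to establish a natural bijection between logarithmic connections on $E$ singular along $D$ and holomorphic splittings of the sequence \eqref{log-Atiyah-ext-seq} for $E_{\GL_n(\bb C)}$; the equivalence of their existence then follows at once.

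First I would set up the differential-operator interpretation of the Atiyah bundle of the frame bundle. For $E_{\GL_n(\bb C)}$, the bundle $\At(E_{\GL_n(\bb C)})$ is canonically the sheaf of first-order holomorphic differential operators $\delta : E \to E$ with scalar symbol, i.e.\ those $\bb C$-linear $\delta$ admitting a (necessarily unique) holomorphic vector field $\sigma(\delta) \in TX$ with $\delta(f s) = f\,\delta(s) + \sigma(\delta)(f)\,s$ for all local functions $f$ and local sections $s$ of $E$. Under this identification the anchor $d'p$ becomes the symbol map $\delta \mapsto \sigma(\delta)$, and its kernel, consisting of operators of symbol zero, that is $\mc O_X$-linear endomorphisms, is exactly $\ad(E_{\GL_n(\bb C)}) \cong \sEnd(E)$. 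Since $\mc A_D(E_{\GL_n(\bb C)}) = (d'p)^{-1}(TX(-\log D))$ by definition, its sections are precisely those first-order operators $\delta$ whose symbol is a logarithmic vector field $\sigma(\delta) \in TX(-\log D)$, with $\widetilde{d'p}$ the restriction of the symbol map.

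Next I would exploit the duality $\Omega_X^1(\log D) = (TX(-\log D))^\vee$, giving a perfect $\mc O_X$-bilinear pairing $\langle\,\cdot\,,\,\cdot\,\rangle : \Omega_X^1(\log D) \times TX(-\log D) \to \mc O_X$. Given a logarithmic connection $\nabla : E \to E \otimes \Omega_X^1(\log D)$ and a local section $v$ of $TX(-\log D)$, contract to define $\nabla_v(s) := \langle \nabla(s),\, v\rangle$. The Leibniz rule of Definition \ref{log-conn-def} gives $\nabla_v(f s) = f\,\nabla_v(s) + \langle s \otimes df,\, v\rangle = f\,\nabla_v(s) + v(f)\,s$, so $\nabla_v$ is a first-order operator with symbol $v$, hence a section of $\mc A_D(E_{\GL_n(\bb C)})$. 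Because the pairing is $\mc O_X$-bilinear and $\nabla(s)$ is fixed, the assignment $v \mapsto \nabla_v$ is $\mc O_X$-linear, so it defines a homomorphism $\eta_\nabla : TX(-\log D) \to \mc A_D(E_{\GL_n(\bb C)})$ with $\widetilde{d'p}\circ \eta_\nabla = \Id_{TX(-\log D)}$, i.e.\ a holomorphic splitting of \eqref{log-Atiyah-ext-seq}.

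Finally I would run this construction in reverse: a splitting $\eta$ assigns to each $v$ an operator $\eta(v)$ with symbol $v$, depending $\mc O_X$-linearly on $v$, and reassembling these via the perfect pairing recovers a $\bb C$-linear map $\nabla : E \to E \otimes \Omega_X^1(\log D)$ whose Leibniz rule is forced by the symbol condition on each $\eta(v)$. The two constructions are mutually inverse since the pairing is perfect, giving the desired bijection. The step demanding the most care, \emph{the main obstacle}, is the first one: verifying cleanly that $\At(E_{\GL_n(\bb C)})$ is the sheaf of scalar-symbol first-order operators with $d'p$ the symbol map, and thereby matching the formal definition $\mc A_D(E_{\GL_n(\bb C)}) = (d'p)^{-1}(TX(-\log D))$ with its analytic description as logarithmic operators. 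This is where the specific geometry of the frame bundle enters; once this dictionary is fixed, the remaining equivalence is formal and driven entirely by the Leibniz rule and the duality between $\Omega_X^1(\log D)$ and $TX(-\log D)$.
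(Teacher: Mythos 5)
Your proposal is correct and follows essentially the same route as the paper: the paper also identifies sections of $\At(E_{\GL_n(\bb C)})$ with $G$-invariant derivations (equivalently, your first-order operators with scalar symbol) and passes between $\nabla$ and the splitting $\eta$ by contraction, setting $\nabla_\xi := (\Id_E\otimes\xi)\circ\nabla$ for local sections $\xi$ of $TX(-\log D) = \sHom(\Omega_X^1(\log D),\mc O_X)$. Your write-up merely makes the symbol-map dictionary and the Leibniz-rule verification more explicit than the paper does.
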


\begin{proof}
	Let $G = \GL_n(\bb C)$. Let $\Der_{\bb C}(E_G)$ be the sheaf of $\bb C$-linear derivations of $\mc O_{E_G}$. 
	Then there is a natural $\mc O_{E_G}$-module isomorphism 
	$\Der_{\bb C}(E_G) \stackrel{\simeq}{\to} \sHom(\Omega_{E_G}^1, \mc O_{E_G}) = TE_G$ defined by sending 
	a locally defined $\bb C$-linear derivation $\xi$ of $\mc O_{E_G}$ to the unique $\mc O_{E_G}$-module 
	homomorphism $\widetilde{\xi} : \Omega^1_{E_G} \to \mc O_{E_G}$ such that $\widetilde{\xi}\circ d = \xi$, 
	where $d : \mc O_{E_G} \to \Omega_{E_G}^1$ is the K\"ahler differential operator on $E_G$. 
	Then the $G$-invariant sections of $\Der_{\bb C}(E_G)$ descend to sections of $\At(E_G)$. 
	
	Now it is clear that given a $\mc O_X$-module homomorphism 
	$\eta : TX(-\log D) \to \mc A_D(E)$ 
	with $\eta\circ\widetilde{d'p} = \Id_{TX(-\log D)}$, for each locally defined section $\xi$ of $TX(-\log D)$, 
	its image $\eta(\xi)$ defines a $G$-invariant $\bb C$-linear derivation of $E$. 
	Thus we have a logarithmic connection on $E$ singular along $D$. 
	Conversely, given a logarithmic connection $\nabla : E \to E\otimes\Omega_X^1(\log D)$ singular along $D$, for 
	each locally defined section $\xi$ of $TX(-\log D) = \sHom(\Omega_X^1(\log D), \mc O_X)$, we get a 
	$\GL_n(\bb C)$-invariant $\bb C$-linear derivation $\nabla_{\xi} := (\Id_E\otimes\xi)\circ\nabla$ of $E$. 
	This defines a splitting of the short exact sequence \eqref{log-Atiyah-ext-seq}. 
\end{proof}

The above Proposition \ref{prop-1} motivates us to define the following (see also \cite[\S 2.2]{Biswas-Dan-Paul-Saha-2017}).  
\begin{definition}
	Let $p : E_G \to X$ be a holomorphic principal $G$-bundle on $X$. 
	A {\it logarithmic connection} on $E_G$ singular along $D$ is a holomorphic vector bundle homomorphism 
	$\eta : TX(-\log D) \to \mc A_D(E_G)$ such that $\widetilde{d'p}\circ\eta = \Id_{TX(-\log D)}$, where 
	$\widetilde{d'p}$ is the homomorphism in \eqref{log-Atiyah-ext-seq}. 
\end{definition}
We refer the exact sequence \eqref{log-Atiyah-ext-seq} as the {\it logarithmic Atiyah exact sequence} of $E_G$ 
associated to the divisor $D$. 
The exact sequence \eqref{log-Atiyah-ext-seq} defines a cohomology class 
\begin{equation}\label{log-Atiyah-class}
\Phi_D(E) \in H^1(X, \ad(E_G)\otimes \Omega_X^1(\log D))\,, 
\end{equation}
which we call the {\it logarithmic Atiyah class} of $E$ along $D$, such that the exact sequence 
\eqref{log-Atiyah-ext-seq} splits holomorphically if and only if $\Phi_D(E) = 0$. 

\subsection{Residue of logarithmic connection on a principal bundle}
Let $D$ be a simple normal crossing divisor on $X$, locally defined by $z_1\cdots z_r = 0$. 
Let us denote by $D_j$ the irreducible component of $D$ locally defined by $z_j = 0$, for each $j = 1, \ldots, r$. 
Let $TX(-\log D)$ be the dual of $\Omega_X^1(\log D)$; this is a locally free coherent sheaf of $\mc O_X$-modules of 
rank $d = \dim_{\bb C}(X)$, with local frame fields given by 
$\big(z_1\frac{\partial}{\partial z_1}, \ldots, z_r \frac{\partial}{\partial z_r}, 
\frac{\partial}{\partial z_{r+1}}, \ldots, \frac{\partial}{\partial z_d}\big)$. 
For each $j = 1, \ldots, r$, over $D_j$, we can identify $z_j\frac{\partial}{\partial z_j}$ with $1$; 
this identification is independent of choice of local coordinate system $(z_1, \ldots, z_d)$ on $X$ 
such that $D_i$ is locally given by vanishing locus of $z_i$, for all $i = 1, \ldots, r$. 
Thus, $TX(-\log D)\big\vert_{D_j}$ is locally free $\mc O_{D_j}$-module generated by 
$$\left(z_1\frac{\partial}{\partial z_1}, \ldots, z_{j-1}\frac{\partial}{\partial z_{j-1}}, 1, 
z_{j+1}\frac{\partial}{\partial z_{j+1}}, \ldots, z_r \frac{\partial}{\partial z_r}, 
\frac{\partial}{\partial z_{r+1}}, \ldots, \frac{\partial}{\partial z_d}\right).$$ 
Therefore, we have an injective homomorphism 
$\mc O_{D_j} \longrightarrow TX(-\log D)\big\vert_{D_j}$. 
Let 
\begin{equation}\label{eta}
\eta : TX(-\log D) \longrightarrow \mc A_D(E_G) 
\end{equation}
be a logarithmic connection on $E_G$ singular along $D$; that means, $\eta$ is an $\mc O_X$-module 
homomorphism such that $\widetilde{d'p}\circ\eta = \Id_{TX(-\log D)}$ (see \eqref{log-Atiyah-ext-seq}). 
Note that the image of $\eta\big\vert_{\mc O_{D_j}}$ lands inside 
$\ad(E_G)\big\vert_{D_j} \subset \mc A_{D_j}(E_G)\big\vert_{D_j}$. 
This gives a section 
\begin{equation}\label{residue-principal}
{\rm Res}_{D_j}(\eta) \in H^0(D_j, \ad(E_G)\big\vert_{D_j}), 
\end{equation}
called the {\it residue} of $\eta$ along $D_j$, for all $j = 1, \ldots, r$. 
Then we have the following. 

\begin{proposition}
	Let $E$ be a holomorphic vector bundle of rank $n$ on $X$, and let $E_{\GL_n(\bb C)}$ be the holomorphic frame bundle of 
	$E$. If $\eta$ in \eqref{eta} is the logarithmic connection on $E_{\GL_n(\bb C)}$ associated to a logarithmic connection 
	$\nabla$ on $E$ as defined in \eqref{log-conn-def}, then for each irreducible component $D_j$ of $D$, we have 
	\begin{equation}
	{\rm Res}_{D_j}(\nabla) = {\rm Res}_{D_j}(\eta)\,,
	\end{equation}
	where ${\rm Res}_{D_j}(\nabla)$ is as defined in \eqref{residue} and 
	${\rm Res}_{D_j}(\eta)$ is as defined in \eqref{residue-principal}. 
\end{proposition}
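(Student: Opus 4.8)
The plan is to verify the identity locally on the trivialising cover $\{U_\lambda\}$ of \S\ref{sec:residue-log-conn-vb}, since both ${\rm Res}_{D_j}(\nabla)$ in \eqref{residue} and ${\rm Res}_{D_j}(\eta)$ in \eqref{residue-principal} are global sections of $\sEnd(E)\big\vert_{D_j} = \ad(E_G)\big\vert_{D_j}$ patched from local data. Fix $U_\lambda$ with coordinates $(z_1, \ldots, z_d)$ so that $D_i = \{z_i = 0\}$ for $i \leq r$, take $f_{\lambda j} = z_j$, and fix the frame $s_\lambda$ for $E\big\vert_{U_\lambda}$. Recall from the proof of Proposition \ref{prop-1} that the splitting $\eta$ attached to $\nabla$ is $\eta(\xi) = (\Id_E\otimes\xi)\circ\nabla = \nabla_\xi$ for a locally defined logarithmic vector field $\xi$, interpreted as the $G$-invariant derivation of $E$ in the direction $\xi$. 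On $D_j$ the distinguished generator of the line $\mc O_{D_j}\hookrightarrow TX(-\log D)\big\vert_{D_j}$ is the class of $z_j\frac{\partial}{\partial z_j}$, so ${\rm Res}_{D_j}(\eta)$ is computed by evaluating $\eta(z_j\frac{\partial}{\partial z_j}) = \nabla_{z_j\partial/\partial z_j}$ and restricting to $D_j$.

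The next step is the contraction computation. Writing $\nabla(s_\lambda) = \nabla_\lambda\otimes s_\lambda$ and decomposing $\nabla_\lambda = R_{\lambda j}\frac{dz_j}{z_j} + S_{\lambda j}$ as in \S\ref{sec:residue-log-conn-vb}, I would pair the vector field $z_j\frac{\partial}{\partial z_j}$ with the matrix of logarithmic one-forms $\nabla_\lambda$. Since $\langle z_j\frac{\partial}{\partial z_j},\, \frac{dz_j}{z_j}\rangle = 1$ while $\langle z_j\frac{\partial}{\partial z_j},\, \frac{dz_{j'}}{z_{j'}}\rangle = 0$ for $j'\neq j$ and $\langle z_j\frac{\partial}{\partial z_j},\, dz_k\rangle = z_j\,\delta_{jk}$, the entries of $S_{\lambda j}$, which carry no $\frac{dz_j}{z_j}$ term, contract to sections divisible by $z_j$. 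Hence $\nabla_{z_j\partial/\partial z_j}(s_\lambda) = (R_{\lambda j} + z_j T_{\lambda j})\,s_\lambda$ for some holomorphic matrix $T_{\lambda j}$, and restricting to $D_j = \{z_j = 0\}$ yields the endomorphism $R_{\lambda j}\big\vert_{U_\lambda\cap D_j} = {\rm Res}_{D_j}(\nabla_\lambda)$ acting on $s_\lambda$.

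It remains to see that this restricted operator is genuinely a section of $\ad(E_G)\big\vert_{D_j} = \sEnd(E)\big\vert_{D_j}$, matching the definition \eqref{residue-principal}. This follows from the commutative diagram \eqref{log-hol-At-ext-seq}: the image $d'p(J(\eta(z_j\frac{\partial}{\partial z_j})))$ equals $z_j\frac{\partial}{\partial z_j}$ viewed inside $TX$, which vanishes identically on $D_j$, so $\eta(z_j\frac{\partial}{\partial z_j})\big\vert_{D_j}$ lies in the kernel $\ad(E_G)\big\vert_{D_j}$. Equivalently, the Leibniz remainder $\nabla_{z_j\partial/\partial z_j}(fs) - f\,\nabla_{z_j\partial/\partial z_j}(s) = (z_j\,\partial f/\partial z_j)\,s$ vanishes on $D_j$, so $\nabla_{z_j\partial/\partial z_j}\big\vert_{D_j}$ is $\mc O_{D_j}$-linear. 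Comparing with the local expression above gives ${\rm Res}_{D_j}(\eta) = {\rm Res}_{D_j}(\nabla)$ on each $U_\lambda\cap D_j$, and the two globally defined sections therefore agree. The only delicate point is the bookkeeping across the two competing descriptions --- reconciling the Poincar\'e-residue/connection-matrix recipe of \eqref{residue} with the contraction recipe of \eqref{residue-principal} --- but once the generator $z_j\frac{\partial}{\partial z_j}$ is correctly identified on both sides, the identity is forced by the single computation above; independence of the choices of $s_\lambda$ and $f_{\lambda j}$ is already guaranteed, since both residues are known to be global sections.
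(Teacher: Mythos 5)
Your proof is correct and follows essentially the same route as the paper, which simply asserts that the identity ``follows from the proof of Proposition \ref{prop-1} and the definition of residue in \eqref{residue-map}''; you have filled in exactly the intended details, namely the contraction of $\nabla_\lambda = R_{\lambda j}\frac{dz_j}{z_j} + S_{\lambda j}$ against the generator $z_j\frac{\partial}{\partial z_j}$ of $\mc O_{D_j} \hookrightarrow TX(-\log D)\big\vert_{D_j}$ and the observation that the $S_{\lambda j}$-contribution is divisible by $z_j$. The verification of $\mc O_{D_j}$-linearity via the vanishing of the Leibniz remainder on $D_j$ is a correct (and welcome) addition that the paper leaves implicit.
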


\begin{proof}
	Follows from the proof of Proposition \ref{prop-1} and the definition of residue in \eqref{residue-map}. 
\end{proof}

\subsection{Extension of structure group}\label{sec:extn-str-gp}
Let $G$ and $H$ be two connected complex Lie groups with Lie algebras $\mf g$ and $\mf h$, respectively. 
Let $f : H \longrightarrow G$ be a homomorphism of complex Lie groups, and $df : \mf h \longrightarrow \mf g$ 
the Lie algebra homomorphism induced by $f$. Let $p : E_H \to X$ be a holomorphic principal $H$-bundle on $X$. 
Then we have a holomorphic principal $G$-bundle $p' : E_G := E_H(G) \to X$ on $X$ obtained by extending the 
structure group of $E_H$ by the homomorphism $f$. Then there is a natural vector bundle homomorphisms 
$\alpha : \ad(E_H) \longrightarrow \ad(E_G)$ and $\beta : \At(E_H) \longrightarrow \At(E_G)$ 
induced by $f$. Then we have the following commutative diagram of vector bundle homomorphisms 
with two rows exact (see \cite{Atiyah-1957}). 
\begin{equation}\label{Hol-At-H-G}
\begin{gathered}
\xymatrix{
	0 \ar[r] & \ad(E_H) \ar[d]^{\alpha} \ar[r] & \At(E_H) \ar[d]^\beta \ar[r]^-{d'p} & TX \ar@{=}[d] \ar[r] & 0 \\ 
	0 \ar[r] & \ad(E_G) \ar[r] & \At(E_G) \ar[r]^-{d'p'} & TX \ar[r] & 0} 
\end{gathered}
\end{equation}
Let $D$ be a reduced effective divisor on $X$. 
Then the commutative diagram \eqref{log-hol-At-ext-seq} and \eqref{Hol-At-H-G} gives the following 
commutative diagram of vector bundle homomorphisms with two rows exact. 
\begin{equation}\label{log-At-H-G}
\begin{gathered}
\xymatrix{
	0 \ar[r] & \ad(E_H) \ar[d]^{\alpha} \ar[r] & \mc A_D(E_H) \ar[d]^{\beta} \ar[r]^-{\widetilde{d'p}} & 
	TX(-\log D) \ar@{=}[d] \ar[r] & 0 \\ 
	0 \ar[r] & \ad(E_G) \ar[r] & \mc A_D(E_G) \ar[r]^-{\widetilde{d'p'}} & TX(-\log D) \ar[r] & 0 } 
\end{gathered}
\end{equation}
If $\eta : TX(-\log D) \to \mc A_D(E_H)$ is a holomorphic vector bundle homomorphism with 
$\widetilde{d'p}\circ\eta = \Id_{TX(-\log D)}$, then $f_*(\eta) := \beta\circ\eta$ satisfies 
$\widetilde{d'p'}\circ(f_*\eta) = \Id_{TX(-\log D)}$. 
Consequently, if $D$ is a simple normal crossing divisor $D$ in $X$, for each irreducible component $D_j$ of $D$, 
we have ${\rm Res}_{D_j}(f_*\eta) = \alpha\circ{\rm Res}_{D_j}(\eta)$; 
(see also \cite[\S 2.4]{Biswas-Dan-Paul-Saha-2017}). 

In fact, it follows from commutativity of the diagram \eqref{log-At-H-G} that there is a natural homomorphism of 
cohomologies 
\begin{equation}\label{log-At-cls-H-G}
f_* : H^1(X, \ad(E_H) \otimes \Omega_X^1(\log D)) \longrightarrow H^1(X, \ad(E_G) \otimes \Omega_X^1(\log D)), 
\end{equation}
induced by $f$, which sends the cohomology class $\Phi_D(E_H)$ to $\Phi_D(E_G)$; see \eqref{log-Atiyah-class}. 
Since the homomorphism \eqref{log-At-cls-H-G} is not necessarily injective, in general, existence of a logarithmic 
connection on $E_G$ singular along $D$ may not ensure existence of a logarithmic connection on $E_H$ singular along $D$. 
However, if $f : H \longrightarrow G$ is an injective homomorphism of connected affine algebraic groups over $\bb C$ 
with $H$ reductive, then the above homomorphism \eqref{log-At-cls-H-G} can be shown to be injective 
(see the proof of \cite[Lemma 3.3]{Biswas-Dan-Paul-Saha-2017} for more details). 
Therefore, from the above discussions, we have the following. 
\begin{proposition}\label{prop:red-str-gp}
	With the above notations, $E_G$ admits a logarithmic connection singular along $D$ if $E_H$ admits a 
	logarithmic connection singular along $D$. Converse holds if $f : H \to G$ is an injective homomorphism 
	of connected affine algebraic groups over $\bb C$ with $H$ reductive. 
\end{proposition}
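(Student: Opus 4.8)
The plan is to reduce both implications to the behaviour of the logarithmic Atiyah class under the induced map $f_*$ of \eqref{log-At-cls-H-G}. Recall from the discussion around \eqref{log-Atiyah-class} that $E_G$ (respectively, $E_H$) admits a logarithmic connection singular along $D$ if and only if $\Phi_D(E_G) = 0$ (respectively, $\Phi_D(E_H) = 0$), and that $f_*\Phi_D(E_H) = \Phi_D(E_G)$.

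For the forward implication I would argue directly from the commutative diagram \eqref{log-At-H-G}: if $\eta : TX(-\log D) \to \mc A_D(E_H)$ satisfies $\widetilde{d'p}\circ\eta = \Id_{TX(-\log D)}$, then commutativity of the right-hand square gives $\widetilde{d'p'}\circ\beta = \widetilde{d'p}$, so $\widetilde{d'p'}\circ(\beta\circ\eta) = \Id_{TX(-\log D)}$ and $\beta\circ\eta$ is a logarithmic connection on $E_G$. Equivalently, $\Phi_D(E_H) = 0$ forces $\Phi_D(E_G) = f_*\Phi_D(E_H) = 0$. This direction needs neither injectivity of $f$ nor reductivity of $H$.

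For the converse the crux is to show that, when $f$ is injective and $H$ is reductive, the homomorphism $f_*$ of \eqref{log-At-cls-H-G} is injective; granting this, $\Phi_D(E_G) = 0$ forces $\Phi_D(E_H) = 0$, and we are done. First I would note that over $\bb C$ an injective homomorphism of affine algebraic groups is a closed immersion, so $df : \mf h \to \mf g$ is injective and $\mf h$ is an $H$-submodule of $\mf g$ for the action of $H$ through $f$ composed with the adjoint representation of $G$. Since $H$ is reductive this module is completely reducible, so the inclusion $\mf h \hookrightarrow \mf g$ admits an $H$-equivariant retraction $\pi : \mf g \to \mf h$.

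The final step is to transport this splitting to cohomology. Using the canonical identification $\ad(E_G) \cong E_H\times^H\mf g$ (with $H$ acting through $f$), under which $\alpha$ is induced by $\mf h \hookrightarrow \mf g$, the retraction $\pi$ induces an $\mc O_X$-linear bundle retraction $r : \ad(E_G) \to \ad(E_H)$ with $r\circ\alpha = \Id_{\ad(E_H)}$. Tensoring with the locally free sheaf $\Omega_X^1(\log D)$ preserves this split injection, and functoriality of sheaf cohomology then gives $(r\otimes\Id)_*\circ f_* = \Id$ on $H^1(X, \ad(E_H)\otimes\Omega_X^1(\log D))$, so $f_*$ is injective, as required. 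The main obstacle I anticipate is precisely this last identification: verifying that the purely Lie-theoretic $H$-equivariant splitting of $\mf g$ descends to a genuine retraction of $\alpha$ as sheaves of $\mc O_X$-modules and remains a retraction after tensoring by $\Omega_X^1(\log D)$; once that is in hand, the injectivity of $f_*$, and hence the converse, is formal.
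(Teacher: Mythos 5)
Your proposal is correct and follows essentially the same route as the paper: the forward implication via the commutative diagram \eqref{log-At-H-G} pushing a splitting $\eta$ to $\beta\circ\eta$, and the converse via injectivity of $f_*$ in \eqref{log-At-cls-H-G}. The only difference is that you spell out the proof of that injectivity (complete reducibility of $\mf g$ as an $H$-module giving an $H$-equivariant retraction $\mf g \to \mf h$, hence a bundle retraction of $\alpha$), whereas the paper simply defers this to the proof of Lemma 3.3 of \cite{Biswas-Dan-Paul-Saha-2017}.
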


Let $G$ be a connected reductive affine algebraic group over $\bb C$. 
Let $P$ be a parabolic subgroup of $G$. Let $R_u(P)$ be the unipotent radical of $P$. 
Then there is a closed connected algebraic subgroup $L \subset P$ such that the 
restriction to $L$ of the quotient homomorphism 
\begin{equation}
	q : P \longrightarrow P/R_u(P)\,,  \nonumber
\end{equation}
is an isomorphism of algebraic groups over $\bb C$. Clearly, $L$ is reductive; and it is known as the 
{\it Levi factor} of $P$ (see e.g., \cite[p.~559]{Milne-2017}). 
Consider the homomorphism 
\begin{equation}\label{eqn:Levi-map}
	q' := \big(q\big\vert_L\big)^{-1}\circ q : P \longrightarrow L\,. 
\end{equation}
Let $E_P$ be a homomorphic principal $P$-bundle on $X$. Let $E_L := E_P(L)$ be the holomorphic principal 
$L$-bundle on $X$ obtained by extending the structure group of $E_P$ by the homomorphism $q'$ in 
\eqref{eqn:Levi-map}. The Lie algebra $\mf n := {\rm Lie}(R_u(P))$ of $R_u(P)$ is the nilpotent radical 
of the Lie algebra $\mf p := {\rm Lie}(P)$ of $P$. The action of $P$ on $\mf n$ gives rise to a holomorphic 
vector bundle $E_P(\mf n) := E_P\times^P\mf n$ on $X$. 
Note that, $E_P(\mf n)$ is a subbundle of $\ad(E_P) = E_P(\mf p)$, and the associated quotient 
vector bundle $\ad(E_P)/E_P(\mf n)$ is isomorphic to $E_P(\mf l) \cong \ad(E_L)$, where 
$\mf l = {\rm Lie}(L)$. Then we have the following. 
\begin{theorem}\label{thm:parabolic-case} 
	With the above notations, if $H^1(X, E_P(\mf n)\otimes\Omega_X^1(\log D)) = 0$, then 
	$E_P$ admits a logarithmic connection singular along $D$ whenever $E_L$ admits a logarithmic connection 
	singular along $D$. 
\end{theorem}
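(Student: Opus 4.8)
The plan is to exploit the extension-of-structure-group machinery already developed, applied to the quotient homomorphism $q' : P \to L$. Since $E_L = E_P(L)$ is obtained by extending the structure group of $E_P$ along $q'$, we have the commutative diagram \eqref{log-At-H-G} with $H = P$ and $G = L$, and in particular the induced map on cohomology
\begin{equation*}
q'_* : H^1(X, \ad(E_P) \otimes \Omega_X^1(\log D)) \longrightarrow H^1(X, \ad(E_L) \otimes \Omega_X^1(\log D))
\end{equation*}
sending the logarithmic Atiyah class $\Phi_D(E_P)$ to $\Phi_D(E_L)$. The hypothesis that $E_L$ admits a logarithmic connection singular along $D$ is equivalent, by the discussion following \eqref{log-Atiyah-class}, to the vanishing $\Phi_D(E_L) = 0$, i.e. $q'_*(\Phi_D(E_P)) = 0$. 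What we must establish is that this forces $\Phi_D(E_P) = 0$, which by the same criterion is equivalent to $E_P$ admitting a logarithmic connection singular along $D$.

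First I would set up the short exact sequence of $P$-modules. The differential $dq' : \mf p \to \mf l$ has kernel exactly $\mf n = \mathrm{Lie}(R_u(P))$, and the paper has already recorded that $E_P(\mf n)$ is a subbundle of $\ad(E_P)$ with quotient $\ad(E_P)/E_P(\mf n) \cong \ad(E_L)$. This gives the short exact sequence of vector bundles
\begin{equation*}
0 \longrightarrow E_P(\mf n) \longrightarrow \ad(E_P) \longrightarrow \ad(E_L) \longrightarrow 0,
\end{equation*}
where the second map is precisely $\alpha$ from diagram \eqref{log-At-H-G}. Next I would tensor this sequence with the locally free sheaf $\Omega_X^1(\log D)$, which preserves exactness, and pass to the associated long exact sequence in cohomology. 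The relevant segment reads
\begin{equation*}
H^1(X, E_P(\mf n)\otimes\Omega_X^1(\log D)) \longrightarrow H^1(X, \ad(E_P)\otimes\Omega_X^1(\log D)) \xrightarrow{\ q'_*\ } H^1(X, \ad(E_L)\otimes\Omega_X^1(\log D)).
\end{equation*}

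Now I would invoke the hypothesis $H^1(X, E_P(\mf n)\otimes\Omega_X^1(\log D)) = 0$. Exactness then shows that $q'_*$ is injective. Since $q'_*(\Phi_D(E_P)) = \Phi_D(E_L) = 0$ and $q'_*$ is injective, we conclude $\Phi_D(E_P) = 0$, whence $E_P$ admits a logarithmic connection singular along $D$. The one point needing care, and the step I expect to require the most attention, is verifying that the map appearing in the long exact sequence is genuinely the same $q'_*$ that carries $\Phi_D(E_P)$ to $\Phi_D(E_L)$ — that is, checking the compatibility between the connecting/induced maps of the cohomology sequence and the functoriality of the Atiyah class under extension of structure group. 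This amounts to unwinding that $\alpha$ in \eqref{log-At-H-G} is induced by $dq'$ and that the logarithmic Atiyah class is natural in the sense that tensoring the $P$-module sequence by $\Omega_X^1(\log D)$ reproduces the map $f_*$ of \eqref{log-At-cls-H-G}; once this identification is in place, the cohomological vanishing does the rest.
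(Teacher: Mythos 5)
Your proof is correct, and it reaches the same obstruction group as the paper by the same short exact sequence $0 \to E_P(\mf n) \to \ad(E_P) \to \ad(E_L) \to 0$, but the packaging is genuinely different. The paper works at the level of sheaf extensions: given a splitting $\eta$ of the logarithmic Atiyah sequence of $E_L$, it forms the preimage $\mc F := \beta^{-1}(\eta(TX(-\log D))) \subset \mc A_D(E_P)$, observes that $\mc F$ is an extension of $TX(-\log D)$ by $E_P(\mf n)$, and splits that extension using the vanishing of $H^1(X, E_P(\mf n)\otimes\Omega_X^1(\log D)) = \mathrm{Ext}^1(TX(-\log D), E_P(\mf n))$; the resulting section composed with $\mc F \hookrightarrow \mc A_D(E_P)$ is the desired connection. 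You instead stay entirely in cohomology: tensor the adjoint-bundle sequence with $\Omega_X^1(\log D)$, read off from the long exact sequence that the hypothesis forces $q'_*$ to be injective on $H^1$, and conclude from $q'_*(\Phi_D(E_P)) = \Phi_D(E_L) = 0$. The two arguments are dual shadows of one another (your injectivity statement says exactly that the kernel of $q'_*$ is the image of $H^1(X,E_P(\mf n)\otimes\Omega_X^1(\log D))$, and the paper's $\mc F$ is precisely a cocycle-level witness that $\Phi_D(E_P)$ lies in that image). What your route buys is brevity and the avoidance of any explicit splitting construction; what it costs is a heavier reliance on the functoriality assertion that $q'_*$ carries $\Phi_D(E_P)$ to $\Phi_D(E_L)$ --- a fact the paper states in the discussion around \eqref{log-At-cls-H-G} but does not verify in detail, and which the paper's own proof of the theorem deliberately sidesteps by manipulating splittings rather than classes. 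You correctly flag this compatibility as the step needing care; it does hold (both maps are $H^1(\alpha\otimes\Id)$, with $\alpha$ induced by $dq'$), so your argument is complete once that identification is written out.
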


\begin{proof}
	Replacing $H$ by $P$ and $G$ by $L$ in the commutative diagram \eqref{log-At-H-G}, we have the following 
	commutative diagram of holomorphic vector bundle homomorphisms, with all rows and columns exact. 
	\begin{equation}\label{log-At-P-L}
	\begin{gathered}
	\xymatrix{
		\, & 0 \ar[d] & 0 \ar[d] & \, & \\ 
		\, & E_P(\mf n) \ar[d] \ar@{=}[r] & E_P(\mf n) \ar[d] & \, & \, \\ 
		0 \ar[r] & \ad(E_P) \ar[d]^{\alpha} \ar[r] & \mc A_D(E_P) \ar[d]^{\beta} \ar[r]^-{\sigma_P} & 
		TX(-\log D) \ar@{=}[d] \ar[r] & 0 \\ 
		0 \ar[r] & \ad(E_L) \ar[d] \ar[r] & \mc A_D(E_L) \ar[d] \ar[r]^-{\sigma_L} & TX(-\log D) \ar[r] & 0 \\ 
		\, & 0 & 0 & \, & \, 
		} 
	\end{gathered}
	\end{equation}
	Let $\eta : TX(-\log D) \to \mc A_D(E_L)$ be an $\mc O_X$-module homomorphism such that 
	$\sigma_L\circ\eta = \Id_{TX(-\log D)}$, where $\sigma_L$ is the homomorphism in \eqref{log-At-P-L}. 
	Let $\mc F := \beta^{-1}(\eta(TX(-\log D))) \subset \mc A_D(E_P)$. This fits into the following 
	short exact sequence of $\mc O_X$-modules 
	\begin{equation}\label{eqn2}
		0 \longrightarrow E_P(\mf n) \longrightarrow \mc F \longrightarrow TX(-\log D) \longrightarrow 0\,. 
	\end{equation}
	Then the logarithmic Atiyah exact sequence for $E_P$ in \eqref{log-At-P-L} splits $\mc O_X$-linearly 
	if the exact sequence \eqref{eqn2} splits $\mc O_X$-linearly. Since the obstruction for splitting of 
	the exact sequence \eqref{eqn2} lies in $H^1(X, E_P(\mf n)\otimes\Omega_X^1(\log D))$, 
	the result follows. 
\end{proof}

\section{Existence of Logarithmic Connection}\label{sec-restriction-thms}
\subsection{Restriction theorem for logarithmic connection}
Let $X$ be a smooth complex projective variety of dimension $d \geq 1$. Fix an embedding of $X$ into a complex 
projective space $\bb{CP}^N$, for some positive integer $N$. A hypersurface $X_n$ of degree $n$ in $X$ is given 
by $X \cap H_n$, where $H_n$ is a hypersurface of degree $n$ in $\bb{CP}^N$. 
For general hypersurfaces $H_n$, we get $X_n = X \cap H_n$ smooth \cite{Hartshorne-1977}. 
Let ${\rm Div}(X)$ be the group of all divisors in $X$. For $D_1, D_2 \in {\rm Div}(X)$, we say that $D_1$ and $D_2$ 
{\it meets properly} if for each prime divisor $V$ (respectively, $W$) appearing with non-zero coefficient in $D_1$ 
(respectively, $D_2$), we have $\dim (V \cap W) = d-2$. 
It is clear that if two reduced effective divisors $D_1, D_2 \in {\rm Div}(X)$ meets properly, 
then $D_1 \cap D_2$ is a divisor in both $D_1$ and $D_2$. 

Let $G$ be a connected complex Lie group, and $E_G$ a holomorphic principal $G$-bundle on $X$. 
Then we have the following result. 

\begin{theorem}\label{thm-1}
	Assume that $\dim_{\bb C}(X) \geq 3$ and $D \subset X$ a reduced effective divisor in $X$. 
	Then $E_G$ admits a logarithmic connection singular along $D$ if and only if for some smooth 
	hypersurface $X_n$ of sufficiently large degree $n$, which intersects $D$ properly, the principal 
	$G$-bundle $E_G\big\vert_{X_n}$ on $X_n$ admits a logarithmic connection singular along $D \cap X_n$. 
\end{theorem}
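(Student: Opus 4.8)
The plan is to recast the statement cohomologically via the logarithmic Atiyah class $\Phi_D(E_G) \in H^1(X, \ad(E_G)\otimes\Omega_X^1(\log D))$ of \eqref{log-Atiyah-class}, which vanishes precisely when $E_G$ admits a logarithmic connection singular along $D$. Writing $\mc G := \ad(E_G)$ and $\mc F := \mc G \otimes \Omega_X^1(\log D)$, and likewise denoting by $\Phi_{D\cap X_n}(E_G\big\vert_{X_n}) \in H^1(X_n, \ad(E_G\big\vert_{X_n})\otimes \Omega_{X_n}^1(\log(D\cap X_n)))$ the class of the restricted bundle, the theorem is equivalent to the assertion that $\Phi_D(E_G) = 0$ if and only if $\Phi_{D\cap X_n}(E_G\big\vert_{X_n}) = 0$ for some smooth $X_n$ of sufficiently large degree meeting $D$ properly. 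I would produce a natural map $\rho_n$ between these two cohomology groups carrying $\Phi_D(E_G)$ to $\Phi_{D\cap X_n}(E_G\big\vert_{X_n})$, and then prove that $\rho_n$ is injective for $n \gg 0$; the forward implication is then immediate and the converse follows from injectivity.

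\textbf{Construction of $\rho_n$.} Fix $L := \mc O_X(1)$, so that $\mc O_X(X_n) \cong L^{\otimes n}$. First I restrict along $i: X_n \hookrightarrow X$ using the exact sequence $0 \to \mc F\otimes L^{-n} \to \mc F \to \mc F\big\vert_{X_n} \to 0$, which gives a restriction map $r_n: H^1(X, \mc F) \to H^1(X_n, \mc F\big\vert_{X_n})$. For $n \gg 0$ one can choose (by Bertini) $X_n$ smooth and transverse to $D$, so that $D \cap X_n$ is a reduced effective divisor on $X_n$ and the conormal computation in local logarithmic coordinates yields the short exact sequence $0 \to L^{-n}\big\vert_{X_n} \to \Omega_X^1(\log D)\big\vert_{X_n} \to \Omega_{X_n}^1(\log(D\cap X_n)) \to 0$. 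Tensoring with the locally free sheaf $\mc G\big\vert_{X_n} = \ad(E_G\big\vert_{X_n})$ and taking cohomology produces a projection $\pi_n: H^1(X_n, \mc F\big\vert_{X_n}) \to H^1(X_n, \ad(E_G\big\vert_{X_n})\otimes\Omega_{X_n}^1(\log(D\cap X_n)))$, and I set $\rho_n := \pi_n\circ r_n$. That $\rho_n(\Phi_D(E_G)) = \Phi_{D\cap X_n}(E_G\big\vert_{X_n})$ is a naturality statement: restricting the logarithmic Atiyah sequence \eqref{log-Atiyah-ext-seq} of $E_G$ along $i$ and comparing it with the logarithmic Atiyah sequence of $E_G\big\vert_{X_n}$ exhibits the latter extension class as the image of the former, which I would verify by chasing the restricted version of diagram \eqref{log-hol-At-ext-seq}.

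\textbf{Injectivity of $\rho_n$.} This is where the hypothesis $\dim_{\bb C}(X) \geq 3$ enters, and it splits into vanishing of the two obstruction groups. The kernel of $r_n$ is the image of $H^1(X, \mc F\otimes L^{-n})$, and by Serre duality $H^1(X, \mc F\otimes L^{-n}) \cong H^{d-1}(X, \mc F^\vee\otimes K_X\otimes L^n)^\vee$ with $d = \dim_{\bb C}(X)$; since $d-1 \geq 1$, Serre vanishing kills this for $n \gg 0$, so $r_n$ is injective. The kernel of $\pi_n$ is the image of $H^1(X_n, \mc G\big\vert_{X_n}\otimes L^{-n})$, and here $\dim X_n = d-1$, so Serre duality on $X_n$ gives $H^1(X_n, \mc G\big\vert_{X_n}\otimes L^{-n}) \cong H^{d-2}(X_n, \mc G^\vee\big\vert_{X_n}\otimes K_{X_n}\otimes L^n)^\vee$, which vanishes for $n \gg 0$ by Serre vanishing (note that $L\big\vert_{X_n}$ is ample) precisely because $d-2 \geq 1$. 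Hence $\pi_n$ is injective and $\rho_n$ is injective for all sufficiently large $n$.

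\textbf{Conclusion and main obstacle.} Combining these: if $\Phi_D(E_G) = 0$ then $\Phi_{D\cap X_n}(E_G\big\vert_{X_n}) = \rho_n(0) = 0$ for every admissible $X_n$, giving the forward implication; conversely, if $\Phi_{D\cap X_n}(E_G\big\vert_{X_n}) = 0$ for some $X_n$ of sufficiently large degree, then $\rho_n(\Phi_D(E_G)) = 0$ and injectivity forces $\Phi_D(E_G) = 0$, so $E_G$ admits a logarithmic connection singular along $D$. I expect the main obstacle to be the second vanishing, $H^1(X_n, \mc G\big\vert_{X_n}\otimes L^{-n}) = 0$, which needs $d-2 \geq 1$ and hence fails in general when $\dim_{\bb C}(X) = 2$ — exactly the dimension where the classical analogue is known to break down. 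A secondary point demanding care is the transversality needed to obtain the logarithmic restriction sequence for $\Omega_X^1(\log D)\big\vert_{X_n}$ when $D$ is merely reduced and possibly singular, which I would handle by choosing $X_n$ generic of large degree meeting every stratum of $D$ properly, together with the naturality identification $\rho_n(\Phi_D(E_G)) = \Phi_{D\cap X_n}(E_G\big\vert_{X_n})$.
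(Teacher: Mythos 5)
Your proposal follows essentially the same route as the paper: reformulate existence via the logarithmic Atiyah class $\Phi_D(E_G)$, factor the comparison map as (restriction to $X_n$) followed by (projection induced by the conormal-type sequence for $\Omega_X^1(\log D)\big\vert_{X_n} \to \Omega_{X_n}^1(\log D_n)$), prove each factor injective for $n \gg 0$ by vanishing of the relevant $H^1$'s, and conclude by naturality of the Atiyah class under restriction. The paper's proof is precisely this, with $r_n$ even being an isomorphism there since it kills $H^2(X,\mc F\otimes\mc O_X(-X_n))$ as well.

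The one step where you diverge, and where your justification as written does not quite go through, is the injectivity of $\pi_n$. You invoke Serre duality and then ``Serre vanishing on $X_n$'' to kill $H^{d-2}\bigl(X_n, \mc G^\vee\big\vert_{X_n}\otimes K_{X_n}\otimes L^n\bigr)$. But Serre vanishing is an asymptotic statement on a \emph{fixed} polarized variety with a \emph{fixed} coherent sheaf, whereas here both the variety $X_n$ and the sheaf (note $K_{X_n}\cong (K_X\otimes L^n)\big\vert_{X_n}$ by adjunction) vary with $n$, and you need the vanishing at twist exactly $n$. This is repairable: rewrite the group as $H^{d-2}\bigl(X_n, (\mc G^\vee\otimes K_X\otimes L^{2n})\big\vert_{X_n}\bigr)$ and sandwich it, via the restriction sequence on $X$, between $H^{d-2}(X,\mc G^\vee\otimes K_X\otimes L^{2n})$ and $H^{d-1}(X,\mc G^\vee\otimes K_X\otimes L^{n})$, both of which vanish for $n\gg 0$ by Serre vanishing on the fixed $X$ once $d-2\geq 1$. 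The paper handles this point by staying on $X$ throughout: it deduces $H^1\bigl(X_n,(\ad(E_G)\otimes\mc O_X(D-X_n))\big\vert_{X_n}\bigr)=0$ from $H^1(X,\ad(E_G)\otimes\mc O_X(D-X_n))=0$ and $H^2(X,\ad(E_G)\otimes\mc O_X(D-2X_n))=0$, which is where the hypothesis $\dim_{\bb C}(X)\geq 3$ enters cleanly. Apart from this, your identification of the kernel of the surjection onto $\Omega_{X_n}^1(\log D_n)$ as the conormal line bundle, and your remarks on transversality versus mere proper intersection, are consistent with (indeed slightly more careful than) the paper.
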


\begin{proof}
	For any divisor $H$ on $X$, we denote by $\mc O_X(H)$ the line bundle on $X$ associated to $H$. 
	Let $\mc F$ be a coherent sheaf of $\mc O_X$-modules on $X$. 
	Consider the exact sequence of sheaves 
	\begin{equation}\label{ext-seq-1}
		0 \longrightarrow \mc F \otimes\mc O_X(-X_n) \longrightarrow \mc F \longrightarrow 
		{\iota_n}_*(\mc F\big\vert_{X_n}) \longrightarrow 0\,, 
	\end{equation}
	where $\iota_n : X_n \hookrightarrow X$ is the inclusion morphism. 
	Since $\dim_{\bb C}(X) \geq 3$, it follows from Serre's theorem \cite[p.~228]{Hartshorne-1977} that for $n \gg 0$, 
	we have 
	\begin{equation}\label{eqn-4}
		H^i(X, \mc F \otimes \mc O_X(-X_n)) = 0\,, \ \ \ \forall \ i = 1, 2. 
	\end{equation}
	Then the long exact sequence of cohomologies associated to the short exact sequence \eqref{ext-seq-1} 
	gives an isomorphism. 
	\begin{equation}\label{eqn-3}
		H^1(X, \mc F) \stackrel{\cong}{\longrightarrow} H^1(X_n, \mc F\big\vert_{X_n})\,. 
	\end{equation}
	Since $X_n$ intersects $D$ properly by assumption, $D_n := X_n \cap D$ is an effective divisor 
	in $X_n$, and we have a natural isomorphism $\mc O_X(D)\big\vert_{X_n} \cong \mc O_{X_n}(D_n)$. 
	Then from \cite[Chapter II, Theorem 8.17]{Hartshorne-1977}, we have an exact sequence of $\mc O_{X_n}$-modules 
	\begin{equation}\label{eqn-1}
		0 \longrightarrow \left(\scr{I}_{X_n}/\scr{I}_{X_n}^2\right) \otimes \mc O_{X_n}(D_n) \longrightarrow 
		\Omega_X^1(D)\big\vert_{X_n} \stackrel{\xi}{\longrightarrow} \Omega_{X_n}^1(D_n) \longrightarrow 0\,,
	\end{equation}
	where $\scr{I}_{X_n}$ is the ideal sheaf of the hypersurface $X_n$ in $X$. 
	Note that there is a natural $\mc O_{X_n}$-module isomorphism 
	\begin{equation}\label{eqn-2}
		\Omega_X^1(\log D)\big\vert_{X_n} \stackrel{\cong}{\longrightarrow} \xi^{-1}(\Omega_{X_n}^1(\log D_n))\,.  
	\end{equation}
	Since $\scr{I}_{X_n}/\scr{I}_{X_n}^2 \cong \mc O_X(-X_n)\big\vert_{X_n}$, from \eqref{eqn-1} using \eqref{eqn-2} 
	we have the following short exact sequence of $\mc O_{X_n}$-modules 
	\begin{equation}\label{eqn-6}
		0 \longrightarrow \mc O_X(D-X_n)\big\vert_{X_n} \longrightarrow 
		\Omega_X^1(\log D)\big\vert_{X_n} \longrightarrow \Omega_{X_n}^1(\log D_n) \longrightarrow 0\,.
	\end{equation}
	Now tensoring the exact sequence \eqref{eqn-6} with $\ad(E_G)\big\vert_{X_n}$, we get the following 
	short exact sequence of $\mc O_{X_n}$-modules 
	\begin{eqnarray}\label{eqn-7}
	0 \longrightarrow \left(\ad(E_G)\otimes\mc O_X(D-X_n)\right)\big\vert_{X_n} \longrightarrow 
	\left(\ad(E_G)\otimes\Omega_X^1(\log D)\right)\big\vert_{X_n} \nonumber \\ 
	\longrightarrow \ad(E_G)\big\vert_{X_n}\otimes\Omega_{X_n}^1(\log D_n) \longrightarrow 0\,.
	\end{eqnarray}
	Now taking $\mc F = \ad(E_G) \otimes\mc O_X(D)$ and $\mc F = \ad(E_G) \otimes\mc O_X(D-X_n)$ 
	in \eqref{eqn-4}, we get 
	\begin{equation}\label{eqn-5}
		H^1(X, \ad(E_G) \otimes\mc O_X(D-X_n)) = 0 = H^2(X, \ad(E_G) \otimes\mc O_X(D-2X_n))\,, 
	\end{equation}
	for $n$ large enough. Fix one such $n \gg 0$. 
	Then applying \eqref{eqn-3} for $\mc F = \ad(E_G) \otimes \mc O_X(D-X_n)$, 
	using \eqref{eqn-5} we get 
	\begin{equation}\label{eqn-8}
		H^1(X_n, \left(\ad(E_G)\otimes\mc O_X(D-X_n)\right)\big\vert_{X_n}) = 0\,. 
	\end{equation}
	Now from the long exact sequence of cohomologies associated to \eqref{eqn-7}, using \eqref{eqn-8} we get 
	an exact sequence of cohomologies 
	\begin{equation}\label{eqn-9}
		0 \longrightarrow H^1(X_n, \left(\ad(E_G)\otimes\Omega_X^1(\log D)\right)\big\vert_{X_n}) 
		\longrightarrow H^1(X_n, \ad(E_G\big\vert_{X_n})\otimes\Omega_{X_n}^1(\log D_n))\,. 
	\end{equation}
	Now taking $\mc F = \ad(E_G)\otimes\Omega_X^1(\log D)$ in \eqref{eqn-3}, from \eqref{eqn-9} we see 
	that the inclusion map $\iota_n : X_n \hookrightarrow X$ induces an injective homomorphism 
	\begin{equation}\label{eqn-10}
		\widetilde{\iota_n} : H^1(X, \ad(E_G)\otimes\Omega_X^1(\log D)) 
		\longrightarrow H^1(X_n, \ad(E_G\big\vert_{X_n}) \otimes \Omega_{X_n}^1(\log D_n)) \,. 
	\end{equation}
	The inclusion morphism $\iota_n : X_n \hookrightarrow X$ induces the following commutative diagram of 
	homomorphisms of sheaves of $\mc O_X$-modules on $X$ with two rows exact. 
	\begin{equation*}
		\xymatrix{
		0 \ar[r] & \ad(E_G) \ar[d] \ar[r] & \mc A_D(E_G) \ar[d] \ar[r] & TX(-\log D) \ar[d] \ar[r] & 0 \\
		0 \ar[r] & {\iota_n}_*(\ad(E_G\big\vert_{X_n})) \ar[r] & {\iota_n}_*(\mc A_{D_n}(E_G\big\vert_{X_n})) 
		\ar[r] & {\iota_n}_*(TX_n(-\log D_n)) \ar[r] & 0 
			}
	\end{equation*}
	Now one can check that the homomorphism \eqref{eqn-10} sends the cohomology class 
	$\Phi_D(E_G) \in$ $H^1(X, \ad(E_G)\otimes\Omega_X^1(\log D))$, 
	as defined in \eqref{log-Atiyah-class}, 
	to the cohomology class $\Phi_{D_n}(E_G\big\vert_{X_n})$. 
	Thus $\Phi_D(E_G) = 0$ if and only if $\Phi_{D_n}(E_G\big\vert_{X_n}) = 0$. 
	This completes the proof. 
\end{proof}

\section*{Acknowledgements}
The authors would like to thank Indranil Biswas for suggesting this question. 
The first named author would like to thank the ``Department of Science and Technology'' of 
the Government of India for the INSPIRE fellowship - 15DSTINS009. 
The second named author would like to thank Arideep Saha for some useful discussions.

\end{document}